\newif\ifsubmit
\crefname{equation}{Eq.}{Eq.}
\crefname{pluralequation}{Eqs.}{Eqs.}
\crefname{figure}{Fig.}{Fig.}
\crefname{pluralfigure}{Figs.}{Figs.}
\crefname{section}{Sect.}{Sect.}
\crefname{pluralsection}{Sects.}{Sects.}
\crefname{appendix}{App.}{App.}
\crefname{pluralappendix}{Apps.}{Apps.}
\crefname{table}{Tab.}{Tab.}
\crefname{pluraltable}{Tabs.}{Tabs.}
\crefname{definition}{Def.}{Def.}
\crefname{pluraldefinition}{Defs.}{Defs.}
\crefname{theorem}{Theorem}{Theorems}
\crefname{pluraltheorem}{Theorems}{Theorems}
\crefname{lemma}{Lemma}{Lemma}
\crefname{plurallemma}{Lemmas}{Lemmas}
\crefname{example}{Example}{Example}
\crefname{pluralexample}{Examples}{Examples}
\crefname{assumption}{Assumption}{Assumption}
\crefname{pluralassumption}{Assumptions}{Assumptions}
\crefname{remark}{Remark}{Remark}
\crefname{pluralremark}{Remarks}{Remarks}
\pgfplotsset{compat=1.17}
\def\orcidID#1{\smash{\href{https://orcid.org/#1}{\protect\raisebox{-1.25pt}{\protect\includegraphics{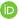}}}}}
\newcommand{\myparagraph}[1]{\medskip\noindent{\bf #1}}
\newcommand{\colorpar}[3]{\colorbox{#1}{\parbox{#2}{#3}}}
\newcommand{\marginremark}[3]{\marginpar{\colorpar{#2}{7em}{\color{#1}#3}}}
  \newcommand{\mv}[1]{}
  \newcommand{\ms}[1]{}
  \newcommand{\ml}[1]{}
  \newcommand{\mh}[1]{}
  \newcommand{\bp}[1]{}
  \newcommand{\mv}[1]{\marginremark{black}{yellow}{\tiny{[MV]~ #1}}}
  \newcommand{\ms}[1]{\marginremark{purple}{white}{\tiny{[MS]~ #1}}}
  \newcommand{\mh}[1]{\marginremark{white}{blue}{\tiny{[MH]~#1}}}
  \newcommand{\bp}[1]{\marginremark{blue}{white}{\tiny{[BP]~#1}}}
\newcommand{\RR}{\mathbb{R}}
\renewcommand{\emptyset}{\varnothing}
\newcommand{\fuzznum}{\mathbb{F}} 
\newcommand{\basicevents}{\mathit{BE}}
\renewcommand{\emph}[1]{{\it #1}}
\begin{document}
\title{Fuzzy Fault Trees: the Fast and the Formal}

%
%
\author{
Thi Kim Nhung Dang\inst{1}\orcidID{0000-0002-3235-5952} \and Benedikt Peterseim\inst{2}\orcidID{0009-0004-3510-4325} \and
Milan Lopuhaä-Zwakenberg\inst{2}\orcidID{0000-0001-5687-854X} \and
Mariëlle Stoelinga\inst{2,3}\orcidID{0000-0001-6793-8165}}

\authorrunning{Dang et al.}
%
\institute{
Independent scholar, the Netherlands \\
\email{t.k.nhung.dang@gmail.com} \\
\and University of Twente, Enschede, the Netherlands  \\
\email{\{benedikt.peterseim, m.a.lopuhaa,  m.i.a.stoelinga\}@utwente.nl}\\
\and Radboud University, Nijmegen, the Netherlands \\
\email{m.stoelinga@cs.ru.nl}\\
}

\allowdisplaybreaks

\maketitle              

\begin{abstract}

We provide a rigorous framework for handling uncertainty in quantitative fault tree analysis based on fuzzy theory. 
We show that any algorithm 
for fault tree unreliability analysis can be adapted to this framework in a fully general and computationally efficient manner.
This result crucially leverages both the $\alpha$-cut representation of fuzzy numbers and the coherence property of fault trees.
We evaluate our algorithms on an established benchmark of synthetic fault trees, demonstrating their practical effectiveness.


\keywords{Fault trees \and reliability analysis \and fuzzy numbers \and uncertainty \and directed acyclic graphs}

\end{abstract}

\section{Introduction}
\label{sec:intro}

\emph{Fuzzy fault trees} are a tool to assess the dependability of safety-critical systems, while simultaneously quantifying the uncertainty that enters these models through their parameters. 
To achieve this, they aim to combine classical \emph{fault tree analysis} with concepts from \emph{fuzzy theory}.
Our goal is to make the idea of fuzzy fault tree analysis rigorous, and to provide computationally fast methods for carrying it out.
\vspace{-15pt}

\subsubsection{Fault trees.} Fault tree analysis (FTA) is a popular method in reliability engineering~\cite{stamatelatos2002fault,ruijters2015FTA}.
It is widely used in industry to assess and improve the dependability of, amongst others,
nuclear power plants, self-driving cars, and aeroplanes.
FTA is recommended by several ISO standards and certification bodies, such as the Federal Aviation Administration (FAA). 
A key aspect of FTA is quantitative assessment, calculating essential \emph{dependability} or \emph{risk metrics}, such as \emph{unreliability}, \emph{availability}, and \emph{mean time to failure}.
This paper studies the so-called mission-time reliability model \cite{stoelinga2025concise}. Here 
each basic event $b$
is assigned a probability $p_b$, representing  
its probability to fail within mission time. 
From these, one can compute the failure probability of the top event,
i.e.~the probability of the system to fail within its mission time, called the 
{\it system unreliability}.

\subsubsection{Fault trees under uncertainty.} Reliability analysis presupposes the availability of precisely known failure probabilities $p_b$. However, in practice this assumption may be unrealistic due to conflicting expert opinions, or the lack of reliable data. In such situations, \emph{uncertainty quantification} enables a precise assessment of the confidence in the estimated unreliability. While there are other approaches to uncertainty quantification in fault trees, as discussed in \Cref{sec:related_works}, we will focus on devising precise and efficient foundations for an approach rooted in \emph{fuzzy theory}.

%

\subsubsection{Fuzzy theory.}\label{par:fuzzy_theory_explanation} 
Fuzzy theory has successfully been applied in numerous domains, 
including control systems~\cite{deBarros2017afirst}, medical imaging, economic risk assessment, decision trees~\cite{Basiura2015advances}, and machine learning~\cite{couso2019fuzzy}. 
Its application to fault trees yields \emph{fuzzy fault trees}.
These handle parameter uncertainty by taking the failure probabilities of basic events to be \emph{fuzzy numbers}.  
Whereas most works on fuzzy fault trees are case studies with an emphasis \emph{how} fuzzy probabilities of basic events are obtained in practice (see, for example, \cite{yazdi2019uncertainty}), we assume that these basic fuzzy probabilities are given. 
Instead, our focus will be to rigorously define the \emph{fuzzy unreliability} in a principled way, and how to compute it efficiently.

\subsubsection{Challenges.}\label{par:challenges}
Despite its successful application in many case studies (see \Cref{sec:related_works}), fuzzy FTA still lacks a rigorous mathematical foundation. Unclear or ad-hoc definitions of system unreliability in fuzzy fault trees impede the interpretability of the resulting risk metric and are hence not an acceptable means for decision-making in safety-critical situations.
In addition, to the best of our knowledge, no generally applicable, precise and efficient algorithm for quantitative fuzzy FTA has so far been presented. 
This paper aims to close both of these gaps.

One major obstacle in fuzzy fault tree analysis is that performing exact arithmetic operations on fuzzy numbers is generally computationally expensive.
Most notably, common classes, or ``shapes'', of fuzzy numbers such as \emph{triangular} and \emph{trapezoidal} fuzzy numbers are not closed under basic (``fuzzified'') arithmetic operations~\cite{tanaka1983fault,Liang1993FFTA,Basiura2015advances}.
For example, 
if we multiply two triangular fuzzy numbers
(via the canonical \emph{Zadeh extension}), then the result is no longer triangular~\cite{Liang1993FFTA}. 



\subsubsection{Contributions.}
To overcome these challenges, this paper contributes:
\begin{enumerate}
    \item A well-motivated, principled and mathematically rigorous definition of the \emph{fuzzy unreliability} metric;
    \item A fast bottom-up algorithm based on \emph{$\alpha$-cuts} for computing fuzzy unreliability in \emph{tree-structured} fault trees in a simple and intuitive way;
    \item A correctness result showing that, even in the general case of \emph{DAG-structured} FTs, \emph{any} unreliability algorithm can be extended to the fuzzy case, assuming a mild regularity condition on fuzzy numbers that holds for all classes of fuzzy numbers used in practice; see \Cref{theo:DAG-algo};
    \item An empirical evaluation of our algorithms using the model checker Storm~\cite{Hensel2022storm}.
\end{enumerate}

Our main result, \Cref{theo:DAG-algo}, enables a simple implementation of unreliability analysis in fuzzy fault trees using existing tools, making our algorithms readily applicable in practice. 
The reason this works is a delicate interplay between two crucial assumptions underlying both fault tree analysis and fuzzy theory. 
The main property of fault trees used is that they are \emph{coherent}: the failure of any basic event never \emph{decreases} the failure probability of the top event. 
On the other hand, the main common assumption on fuzzy numbers we use is that their $\alpha$-cuts (i.e.~$\alpha$-upper level sets) are intervals. 
The insight that fuzzy probabilities are faithfully and efficiently represented and computed by their $\alpha$-cuts is the final ingredient which makes our methods work.


\section{Fault trees}\label{sec:FTs}

\begin{wrapfigure}[16]{r}{0.4\textwidth}
\centering
\vspace{-6em}
\includegraphics[width=4.5cm]{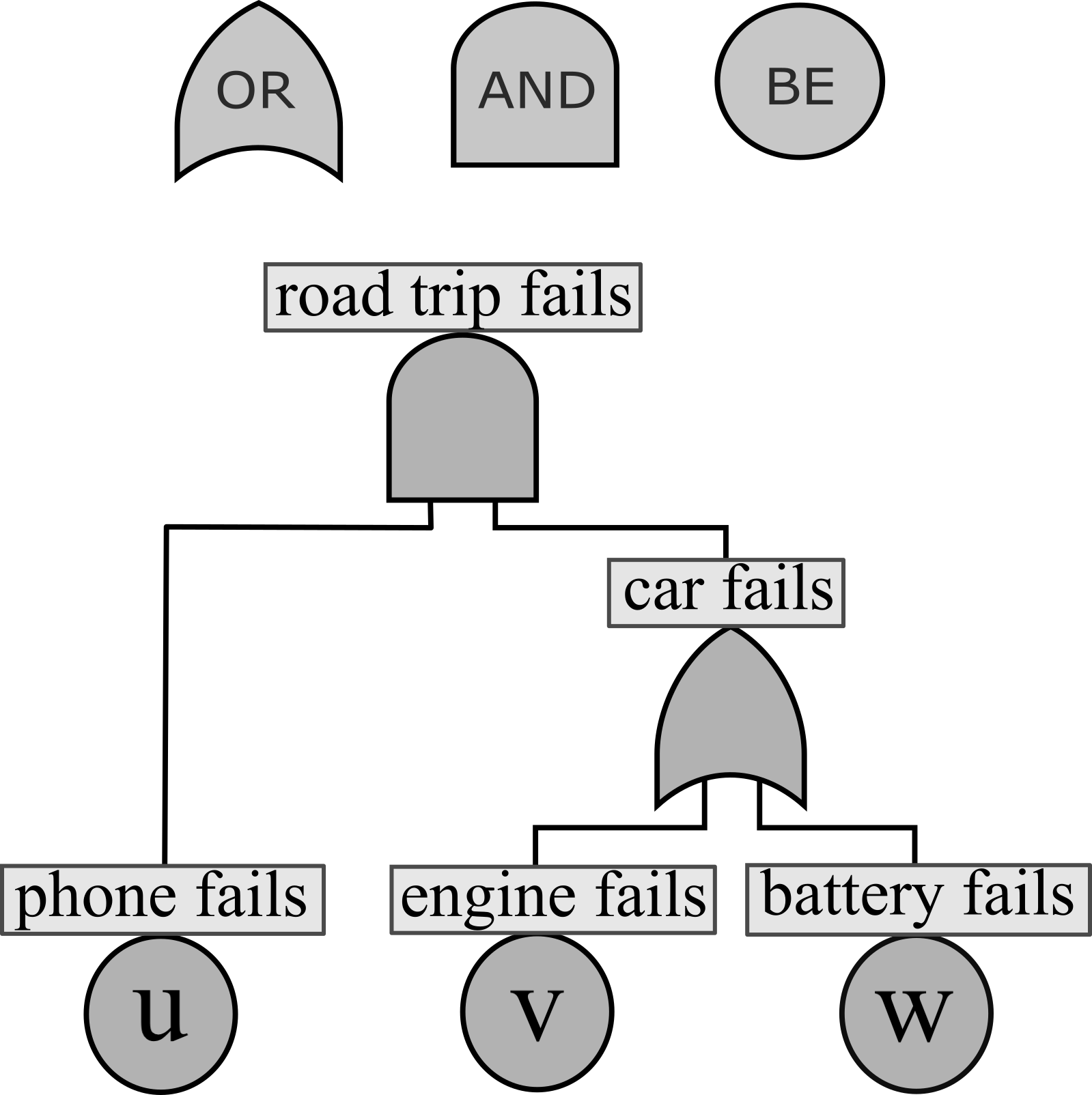}
\caption{Fault tree for a road trip. The road trip fails if both the phone fails and the car fails; the latter happens when either the engine or the battery fails.
Its structure function equals
$S_T(\,\vec{b}\:) = b_u \wedge (b_v \vee b_w)$.
}
\label{fig:FT-intro}
\end{wrapfigure}

Fault trees (FTs) are hierarchical diagrams whose top event represents system failure, and whose leaves, called \emph{basic events} (BEs), represent atomic failures. 
Intermediate gates are AND- or OR-gates, and propagate failures according to the status of their inputs; see Fig.~1.


\begin{definition}\label{def:FTs}
    A \emph{fault tree} (FT) is a tuple $T=(V,E,t)$, where $(V,E)$ is a rooted directed acyclic graph (DAG), and $t$ is a map $t\colon V \rightarrow \{\mathtt{BE}, \mathtt{OR}, \mathtt{AND}\}$ such that for all $v \in V$, $t(v) = \mathtt{BE}$ if and only if $v$ is a leaf. 
    The set of basic events is written $\basicevents_T = \{ v\in V | t(v) = \mathtt{BE}\}$. 
    The root of $T$ is denoted $R_T$. For a node $v \in V,$ we write $ch(v)$ for the set of all children of $v$.
\end{definition}
A fault tree $T = (V, E, t)$ need not be a tree in the graph-theoretic sense. If the underlying DAG $(V, E)$ forms a tree (each node has a unique parent), it is \emph{tree-structured}; otherwise, it is \emph{DAG-structured}.

A binary vector $\vec{b} \in \mathbb{B}^{\text{BE}_T}$ is called a \emph{status vector}, where $\mathbb{B}=\{0,1\}$ is the set of Booleans, where 1 indicates failure, and 0 means operational. 
A \emph{probabilistic status vector} is a probability vector $\vec{p} \in [0,1]^{\basicevents_T}$.
Whether or not the overall system fails given a status vector is determined by the \emph{structure function}: 

\begin{definition}
\label{def:structure_func}
Let $T$ be a FT. The \emph{structure function} $S_T\colon V \times \mathbb{B}^{\text{BE}_T} \to \mathbb{B}$ of $T$ is defined, for a node $v \in V$ and a status vector $\vec{b}=(b_w)_{w\in \text{BE}_T}\in \mathbb{B}^{\text{BE}_T}$, by
\begin{align*} 
S_T(v,\vec{b}) =&
\begin{cases}
    \bigvee_{w\in ch(v)}S_T(w,\vec{b})  & \parbox{55pt}{if~$t(v)=\mathtt{OR}$,}\\
    \bigwedge_{w\in ch(v)}S_T(w,\vec{b})  & \parbox{55pt}{if~$t(v)=\mathtt{AND}$},\\
    b_v  & \parbox{55pt}{if~$t(v)=\mathtt{BE}$.}
\end{cases}
\end{align*}
We write $S_T(\,\vec{b}\:):= S_T(R_T, \vec{b})$ for the structure function of $T$ at its roots. 
A status vector $\vec{b}$ that reaches the root $R_T$ i.e., 
$S_T(\,\vec{b}\:)=1$ is called a \emph{cut set}. 
The set of all cut sets of $T$ is denoted $\mathcal{C}_T$.
\end{definition}

\subsection{Fault tree reliability analysis}


The \emph{system unreliability} is the probability that a system fails within its mission time. 
In fault tree analysis, the unreliability is obtained as the probability that the top event occurs, if each basic event $v$ is assigned a failure probability $p_v$, i.e. the probability that this BE fails within its mission time. 
The latter are given as a \emph{probabilistic status vector} $\vec{p} \in [0,1]^{\text{BE}_T}$.
%

 

\begin{definition}\label{def:unreliability}
    Let $T=(V,E,t)$ be a FT with probabilistic status vector $\vec{p}=(p_v)_{v\in V}$. The \emph{unreliability} of $T$ with respect to $\vec{p}$ is defined as
    $$ U_T(\vec p) := \mathbb{P}\left[S_T\left(\,\vec{B}\,\right) = 1\right],$$
    where $\vec{B} = (B_v)_{v \in \text{BE}_T}$ is a random vector whose components $B_v$ are all independent and Bernoulli-distributed with probability $p_v$.
\end{definition}

This definition assumes all basic event failures to be independent---a standard assumption, since all dependencies are captured by the FT gates. 
Using the definition of $\mathcal{C}_T$, we see that $U_T(\vec{p})$ is equivalently given by,
\begin{align}\label{eq:UT}
    U_T(\vec{p}) &= \sum_{\vec{b} \in \mathcal{C}_T} \prod_{v \in \mathrm{BE}_T} p_v^{b_v} \cdot \bigl(1-p_v\bigr)^{(1-b_v)}.
\end{align}




\section{Fuzzy numbers}\label{sec:fundamentals_fuzzy}

\begin{wrapfigure}[10]{r}{0.4\textwidth}
\begin{center}
\vspace{-3.5em}
\includegraphics[width=0.8\textwidth]{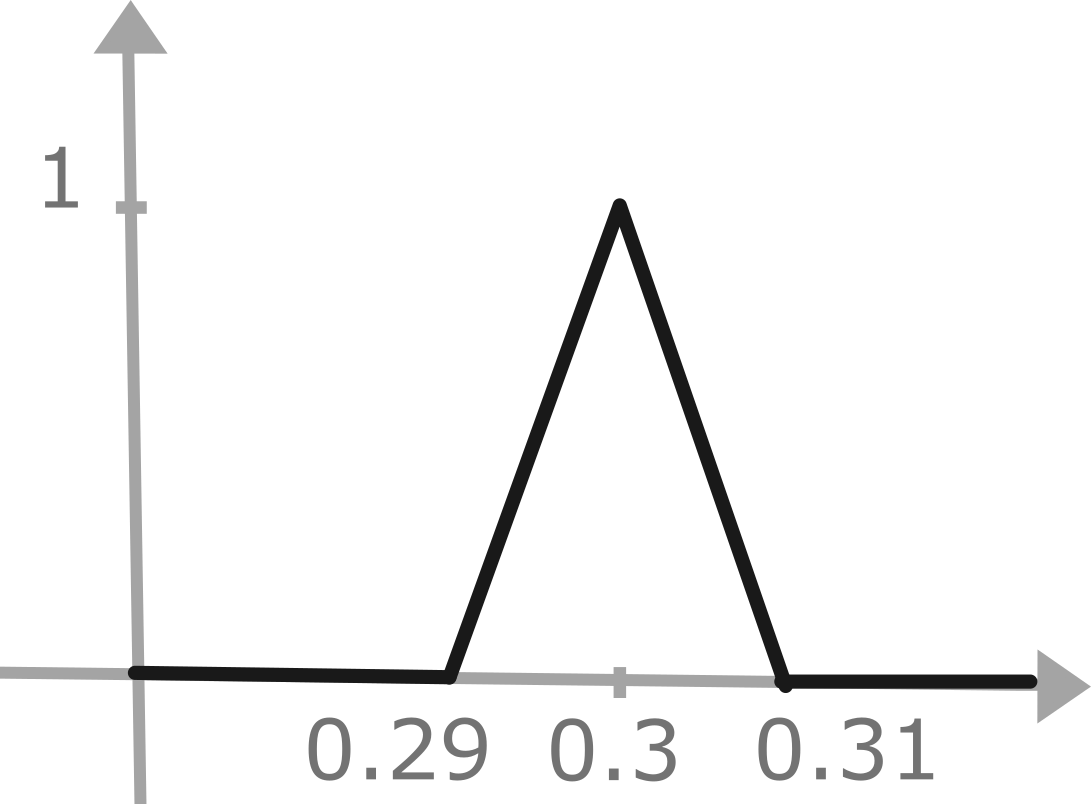}
\vspace{-0.5em}
\caption{``Approximately $0.3$''}
\label{fig:fuzzy-ex}
\end{center}
\end{wrapfigure}

Fuzzy theory was proposed in  \cite{zadeh1965fuzzy}
to reason about vagueness in a precise way.
\Cref{fig:fuzzy-ex} illustrates the 
fuzzy number $\mathsf{x}= $ ``approximately $0.3$''. 
Here, 
 $\mathsf{x}$ is not a single value, but rather a function, called the  \emph{fuzzy membership degree}.
That is,
$\mathsf{x}[x]$ indicates
\emph{how much} $x$ resembles $0.3$:
 At $0.3$, the membership degree is $1$, indicating that ``$0.3$ is unequivocally \emph{$0.3$}''. As we move farther from $0.3$, the likeness to ``approximately $0.3$'' diminishes, and at $0.31$ or $0.29$, it is clear these numbers are \emph{not} ``approximately 0.3''. 
Alternatively, membership functions can represent trust, where $\mathsf{x}[x]$ denotes our trust in $\mathsf{x}$ equalling $x$. If $\mathsf{x}[x] = 0$, then trust is zero; if $\mathsf{x}[x] = 1$, trust is maximal. This notion can be expressed for members of any set, leading to the definition of \emph{fuzzy elements}.



\begin{definition}
A \emph{fuzzy element} of a set $X$ is a function $X \rightarrow [0,1]$. The set of 
fuzzy elements of $X$ is denoted $\mathbf{F}(X)$. A \emph{fuzzy number} is a fuzzy element of $\RR$.
\end{definition}

\myparagraph{Classes of fuzzy numbers.}\label{sec:classes-of-fuzzy-numbers} Several common types of fuzzy numbers exist. 
For real numbers $a \leq b \leq c \leq d$, the \emph{trapezoidal fuzzy number} $\mathsf{trap}_{a,b,c,d} \in \mathbf{F}(\mathbb{R})$  is defined as (see Fig.~\ref{fig:fuzzy-number-plots}):

\begin{align*}
    \mathsf{trap}_{a,b,c,d}[x] &:= 
    \begin{cases}
        \tfrac{x-a}{b-a}, & \textrm{if } a < x < b,\\
       1, & \textrm{if } b\leq x \leq c,\\
        \tfrac{d-x}{d-c}, & \textrm{if } c < x < d,\\
        0, & \text{otherwise}.
    \end{cases}
\end{align*}
Trapezoidal fuzzy numbers generalize \emph{triangular fuzzy numbers}, which are defined as $\triangle_{a,b,d}$ $:=$ 
$\mathsf{trap}_{a,b,b,d}$, 
and interval fuzzy numbers ($\mathbb{1}_{[a,b]} := \mathsf{trap}_{a,a,b,b}$). 
The latter allows for the treatment of \emph{imprecise probability} in fault trees \cite{jacob2011uncertainty,jacob2012imprecise} as a special case of fuzzy fault tree analysis.
\emph{Gaussian fuzzy numbers} are characterized by their mean $m$ and standard deviation $d$:
\[\mathsf{gauss}_{m,d}[x] := \exp{(\tfrac{-(x-m)^2}{2d^2})}.\]
\begin{figure}[t]
    \centering
    \includegraphics[width=1.0\linewidth]{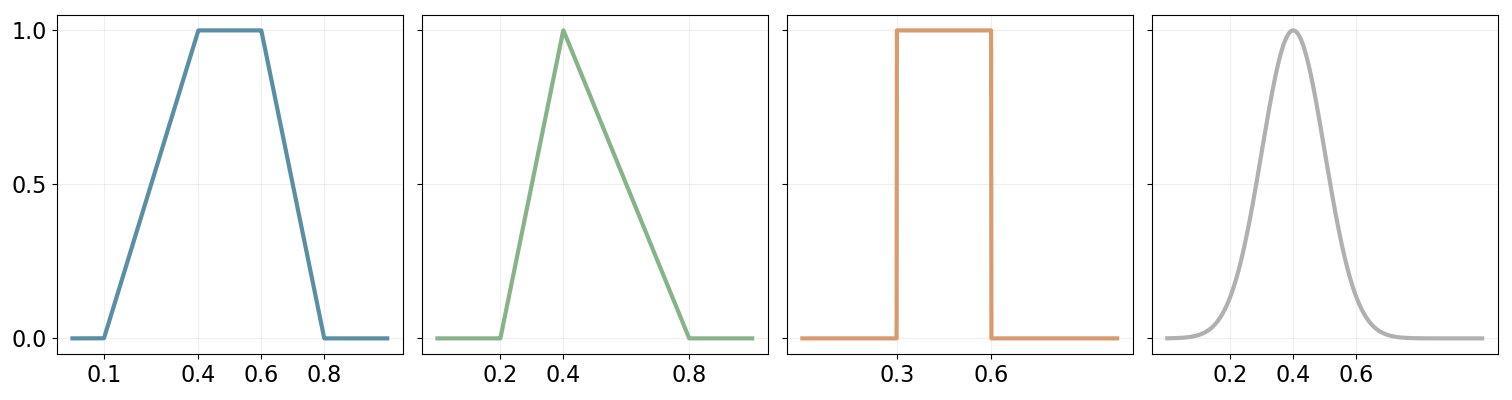}
    \caption{Membership functions, from left to right, of a \emph{trapezoidal} $\mathsf{trap}_{0.1,\, 0.4, \,0.6, \,0.8}$, \emph{triangular} $\triangle_{0.2, \,0.4,\,0.8}$, \emph{interval} $\mathbb{1}_{[0.3, \,0.6]}$, and \emph{Gaussian} $\mathsf{gauss}_{0.4,\,0.1}$ fuzzy number.}
    \label{fig:fuzzy-number-plots}
\end{figure}

%



\subsection{Zadeh's extension principle}

Zadeh's extension principle \cite{Jezewski2017theory,zadeh1965fuzzy} lifts any function $f:X\to Y$ to a function of fuzzy elements 
$\widetilde{f}\colon \mathbf{F}(X)\rightarrow \mathbf{F}(Y)$.
To understand how this works, assume first that $f$ is injective and that $y\in Y$ with $f(x)=y$. 
Then we set $\widetilde{f}(\mathsf{x})[y]= \mathsf{x}[x]$, as our trust for $f$ to be equal to $y$ at $\mathsf{x}$ should be the same as our trust for $\mathsf{x}$ to be equal to $x$. 
Also, for $y\in Y$ with $f^{-1}(y) = \emptyset$, we set $\widetilde{f}(\mathsf{x})[y]=0$, since $f$ never takes on the value $y$.
Now, if $f$ is not injective, then there are multiple values $x$ with $f(x)=y$. Each of these values $x\in f^{-1}(y)$ has a fuzzy membership degree $\mathsf{x}[x]$. Zadeh's extension principle takes the highest possible membership degree among these. The underlying idea is as follows: 
Consider a fuzzy number $\mathsf{x}$ with
$\mathsf{x}[-2]= 0.3$ and 
$\mathsf{x}[2]= 0.9$
consider the function $f(x)= x^2$. 
How much trust do we have that ``$\mathsf{x}^2=4$'', i.e.  what is the value of $\widetilde f(\mathsf{x})[4]$? 
We can defend a trust degree of $0.9$, by assuming that the value $\mathsf{x}^2=4$ was obtained by taking as input $x=2$, which has trust degree $\mathsf{x}[2]= 0.9$.

Finally, for functions of multiple arguments, membership degrees of independent arguments are combined by taking their minimum.
This is justified as follows: to trust in the value of a multivariate function $f$ with a degree of $\alpha$, our trust at \emph{all} of its arguments must be at least $\alpha$.

\begin{definition}[Zadeh's Extension Principle]\label{def:extension_principle}
Let $f:X_1\times\dots \times X_n \to Y$ be a function. The \emph{Zadeh extension} of $f$ is defined as the function, 
\begin{align*}
 \widetilde{f} & \colon \mathbf{F}(X_1) \times \dots \times \mathbf{F}(X_n) \rightarrow \mathbf{F}(Y),  \\ 
   \widetilde{f}(\vec{\mathsf{x}})[y]
& := 
\sup \left\{\left.\min_{i=1,\dots n} \mathsf{x}_i[x_i] \:\right\vert\: \vec x \in f^{-1}(y) \right\}, 
\end{align*}
for all $y\in Y$, $\vec{\mathsf{x}} \in \mathbf{F}(X_1) \times \dots \times \mathbf{F}(X_n)$.
\end{definition}


On certain families of fuzzy elements, addition and subtraction operations can be performed in a straightforward manner. For example, for two trapezoidal fuzzy numbers we have
\begin{align*}
    \mathsf{trap}_{a_1,a_2,a_3,a_4} \  \widetilde{+} \ \mathsf{trap}_{b_1,b_2,b_3,b_4} &= \mathsf{trap}_{a_1 + b_1, a_2 + b_2, a_3 + b_3, a_4 + b_4},\\
    \mathsf{trap}_{a_1,a_2,a_3,a_4} \ \widetilde{-} \ \mathsf{trap}_{b_1,b_2,b_3,b_4} &= \mathsf{trap}_{a_1 - b_4, a_2 - b_3, a_3 - b_2, a_4 - b_1}.
\end{align*}

In general, however, there are no such simple formulas for the Zadeh extension of arithmetic operations. In particular, arithmetic operations do not preserve the shape of the fuzzy numbers:
For example, the product of two trapezoidal fuzzy numbers is \emph{not} a trapezoidal fuzzy number, in general.
%

\section{Fuzzy fault trees}\label{sec:fuzzy_unreliability}

Fuzzy failure probabilities arise when the failure probabilities of basic events as fuzzy numbers. Then, the fuzzy unreliability is obtained by the Zadeh extension of the system unreliability function.

\begin{example}\label{ex:fuzzy_unreliability}
Consider again the FT $T$ from Fig.~\ref{fig:FT-intro}. By \Cref{eq:UT} its unreliability function $U_T\colon [0,1]^3 \rightarrow [0,1]$ is given by
\begin{align*}
    U_T(p_a, p_b, p_c) &= p_ap_bp_c+  p_a(1-p_b)p_c +  p_ap_b(1-p_c)\\
    &= p_ap_c+p_ap_b-p_ap_bp_c.
\end{align*}
Now, suppose $T$ is equipped with a \emph{fuzzy} failure probabilistic status vector $\vec{\mathsf{p}}=(\mathsf{p}_a, \mathsf{p}_b, \mathsf{p}_c)$. 
The fuzzy unreliability $\widetilde{U}_T(\mathsf{p}_a, \mathsf{p}_b, \mathsf{p}_c)$ is then defined as the Zadeh extension of $U_T$. 
\[
\widetilde{U}_T(\vec{\mathsf{p}})[y] = \sup_{p_a,p_b, p_c}  \min \{ \mathsf{p}_a[p_a], \mathsf{p}_b[p_b], \mathsf{p}_c[p_c] \} ,
\]
where the supremum is taken over the set of all $p_a,p_b, p_c \in [0,1]$ such that $p_ap_c+p_ap_b-p_ap_bp_c  = y$,  for all $y\in [0,1]$.

Suppose that $\mathsf{p}_b$ and $\mathsf{p}_c$ are crisp numbers with $\mathsf{p}_b=0.1$ and $\mathsf{p}_c=0.4$. Assume that $\mathsf{p}_a$takes probability $0.5$ or $0.8$ with fuzzy membership values $0.7$ and $1$, respectively. 
%
\begin{align*}                 
\widetilde{U}_T(\vec{\mathsf{p}}) =\begin{cases}
                        1, & \textrm{ if $y=0.368$},\\
                        0.7, & \textrm{ if $y=0.23$},\\
                        0, & \textrm{ otherwise}.
                    \end{cases}
    \end{align*}
\end{example}


\noindent Generalising this example, we obtain our main definition.

\begin{definition}[Fuzzy unreliability, fuzzy fault trees]\label{def:fuzzy_unreliability}
    Let $T$ be a FT. 
    \begin{enumerate}
            \item A \emph{fuzzy probabilistic status vector} is an element $\vec{\mathsf{p}}$ of $\mathbf{F}([0,1])^{\mathrm{BE}_T}$.

            \item The \emph{fuzzy unreliability} of $T$ given $\vec{\mathsf{p}}$ is defined as $\widetilde{U}_T(\vec{\mathsf{p}})$, where 
            $$\widetilde{U}_T\colon \mathbf{F}([0,1])^{\mathrm{BE}_T} \rightarrow \mathbf{F}([0,1])$$ 
            is the Zadeh extension of the function $U_T$ from Definition~\ref{def:unreliability}.
    \end{enumerate}
More concretely, $\widetilde{U}_T(\vec{\mathsf{p}})$ is the fuzzy element of $[0,1]$ defined by,
\begin{align}\label{eq:fuzzy_unreliability}
\widetilde{U}_T(\vec{\mathsf{p}})[y] &= \sup \left\{ \left.\min_{\;v \,\in\, \mathrm{BE}_T} \mathsf{p}_v[p_v] \;\right|\; \vec{p} \in [0,1]^{\mathrm{BE}_T},\; U_T(\vec{p}) = y
\right\}, 
\end{align}
for all $y\in [0,1]$. A \emph{fuzzy fault tree} is a fault tree equipped with a fuzzy probabilistic status vector.
\end{definition}

\begin{wrapfigure}[12]{r}{0.45\linewidth}
    \centering
    \vspace{-35pt}
    \includegraphics[width=0.95\linewidth]{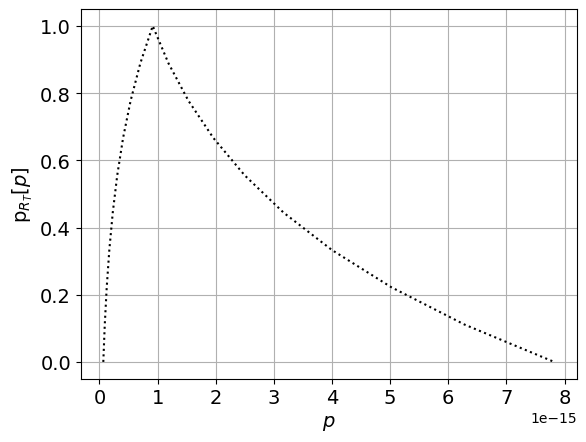}
    \caption{Fuzzy unreliability $\widetilde{U}_T(\vec{\mathsf{p}})$ for specific $T$ and $\vec{\mathsf{p}}$ from \Cref{sec:dag-structured-experiments}.}
    \label{fig:top_p_random}
\end{wrapfigure}

We will see a concrete, illustrative computation of the fuzzy unreliability in \Cref{ex:fuzzy_unreliability_BU_calc}. 
More realistic examples will be provided in the experiments in \Cref{sec:experiment}. 
Figure~\ref{fig:top_p_random} illustrates the fuzzy probability of top-level event failure for a particular fuzzy fault tree $(T, \vec{\mathsf{p}})$, which we have randomly selected from the benchmark used in \Cref{sec:dag-structured-experiments}. From the figure, we see that the unreliability shows a considerable skew and non-linearity, despite all basic events being equipped with symmetric triangular fuzzy probabilities. We hence obtain a much more informative and complete picture of the uncertainty in the probability of top-level system failure, going beyond both point-estimates and probability intervals.

\section{Computing fuzzy unreliability I: tree-structured case}\label{sec:tree-str-calc}

When $T$ is tree-structured, the fuzzy unreliability $\tilde{U}_T(\vec{\mathsf{p}})$ can be found using a bottom-up algorithm that proceeds exactly as for ordinary fault trees, replacing arithmetic operations by their Zadeh extensions.
We first review the ordinary, ``crisp'' case.
The general DAG-structured case will be treated in \Cref{sec:DAG_FTs}.
\subsubsection{Crisp case.}\label{sec:bottom-up-ordinary-fts}
Already in the crisp case, computing the unreliability for a fault tree is generally difficult (in fact, NP-hard~\cite{lopuhaazwakenberg2023ftreliability}). 
Naively applying \Cref{eq:UT} requires a summation over the entire set $\mathcal{C}_T$ of cut sets, and is therefore computationally infeasible for large FTs. 
When $T$ is tree-structured, the unreliability can instead be computed in a bottom-up fashion, assigning a probability $p_v$ to each node along the way. 
For a node $v$ with children $v_1,\ldots,v_n$, we write
\begin{align} 
p_v :=&
\begin{cases}\label{eq:prob_laws}
    1 - \prod_{i=1}^{n} (1 - p_{v_i})  & \parbox{55pt}{if~$t(v)=\mathtt{OR}$,}\\
    \prod_{i=1}^{n} p_{v_i}  & \parbox{55pt}{if~$t(v)=\mathtt{AND}$}.
\end{cases} 
\end{align}
Then, if $T$ is tree-structured, $U(T)=p_{R_T}$.
\begin{example}\label{ex:compute_probability}
    Consider the FT from Fig.~1; see also below. Let $p_u=0.8, p_v=0.1$, and $p_w=0.4$. The top-level failure probability is    
    \begin{align*}
        p_{R_T} &= p_u \cdot \bigl(1 - (1 - p_v) \cdot (1 - p_w) \bigr) = 0.368.
    \end{align*}
\end{example}

\begin{wrapfigure}[7]{r}{0.3\textwidth}
\centering
\vspace{-3em}
\includegraphics[width=3cm]{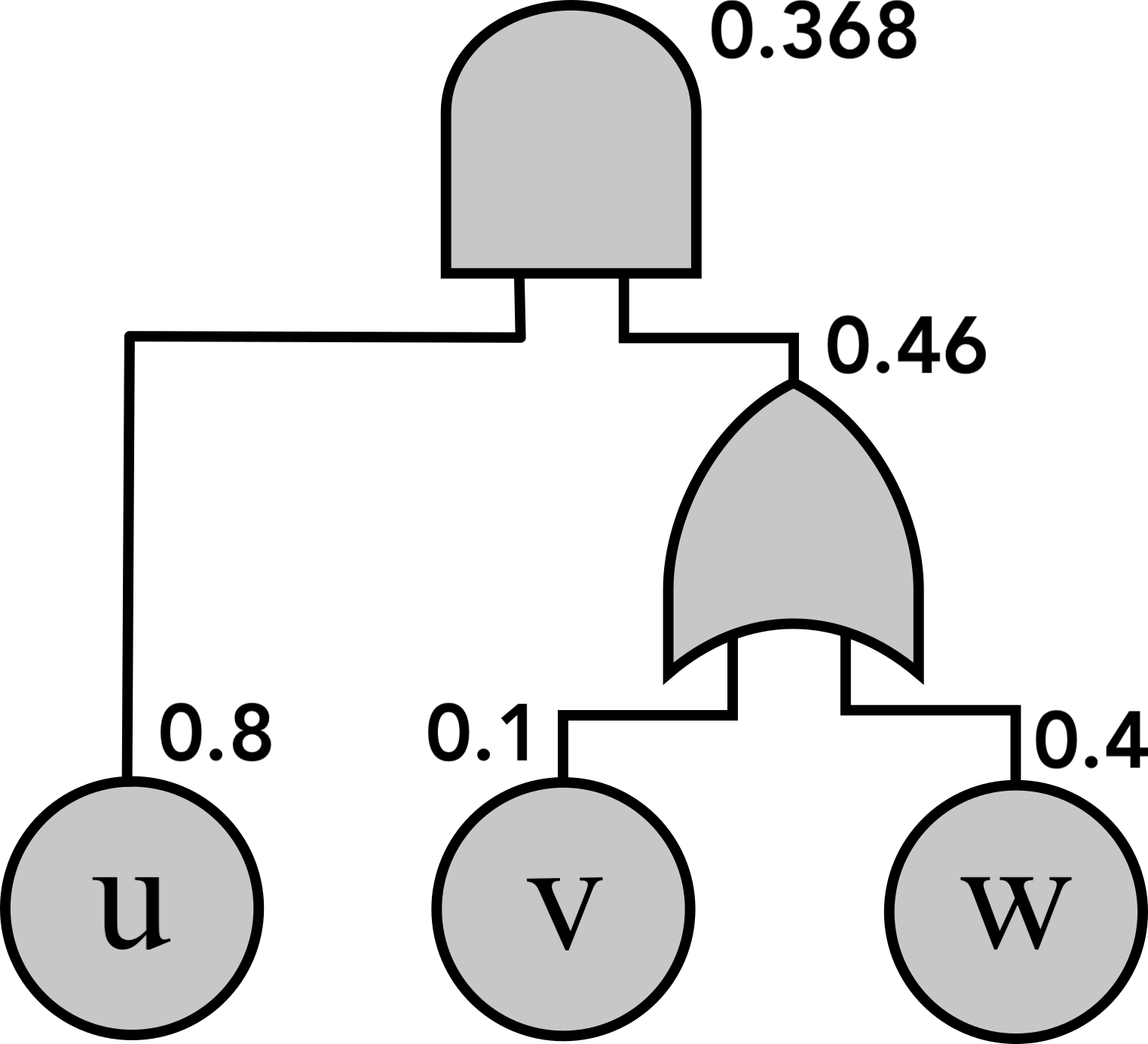}
\end{wrapfigure}

The bottom-up algorithm is fast, but does not extend to general, DAG-structured FTs. 
The reason is that \eqref{eq:prob_laws} only computes the failure probability of $v$ correctly when the $v_i$ all represent independent events, which will not generally be true if they share children. For DAG-structured FTs the state-of-the-art approach is to translate the FT to a binary decision diagram \cite{rauzy1993new}, on which a bottom-up algorithm is run. The worst-case time complexity of this approach is exponential, but is very fast in practice \cite{basgoze2022BDDs}.

\subsubsection{Fuzzy case.} To compute the \emph{fuzzy} unreliability, we now replace arithmetic operations by their Zadeh extensions, letting
\begin{equation} \label{eq:bu}
\mathsf{p}_{v} =
\begin{cases}
    1 \; \widetilde{-} \; \widetilde{\prod}_{w \in ch(v)} (1 \;\widetilde{-}\; \mathsf{p}_{w})  & \parbox{55pt}{if~$t(v)=\mathtt{OR}$,}\\
    \widetilde{\prod}_{w \in ch(v)} \mathsf{p}_{w} & \parbox{55pt}{if~$t(v)=\mathtt{AND}$}.
\end{cases} 
\end{equation}
Here, we write ``$1$'' for the (``crisp'') fuzzy number whose membership function is $1$ at the number $1$ and vanishes everywhere else.
The following result then states that the fuzzy unreliability can be computed recursively using \Cref{eq:bu}. It is proved analogously to a similar result for attack trees in~\cite{dang2024fuzzy}.

\begin{theorem}
Let $T$ be a tree-structured FT, and let $\vec{\mathsf{p}} \in \mathbb{F}([0,1])^{\text{BE}_T}$ be a vector of fuzzy probabilities. Then $\mathsf{p}_{R_T} = \widetilde{U}_T(\vec{\mathsf{p}})$.
\end{theorem}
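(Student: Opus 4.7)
The plan is to prove the equality $\mathsf{p}_{R_T} = \widetilde{U}_T(\vec{\mathsf{p}})$ by structural induction on the tree $T$, with the key technical tool being a compositional property of Zadeh's extension principle. The base case, a single basic event, is immediate: the Zadeh extension of the identity is the identity on $\mathbf{F}([0,1])$, which matches the initialization of the bottom-up algorithm at the leaves.

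The inductive step hinges on a compositional lemma, which I would state and prove first. Suppose $f\colon X_1 \times \cdots \times X_n \to Y$ factors as $f(\vec{x}) = g(h_1(\vec{x}^{(1)}), \ldots, h_m(\vec{x}^{(m)}))$, where $\vec{x}^{(1)}, \ldots, \vec{x}^{(m)}$ partition $\vec{x}$ into \emph{disjoint} sub-tuples. Then
\[
\widetilde{f}(\vec{\mathsf{x}}) \;=\; \widetilde{g}\bigl(\widetilde{h}_1(\vec{\mathsf{x}}^{(1)}), \ldots, \widetilde{h}_m(\vec{\mathsf{x}}^{(m)})\bigr).
\]
To see this, I would unfold the supremum-of-minima in Definition~\ref{def:extension_principle} and condition on the intermediate values $z_j := h_j(\vec{x}^{(j)})$. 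The constraint $f(\vec{x}) = y$ decomposes into $g(\vec{z}) = y$ together with $h_j(\vec{x}^{(j)}) = z_j$ for each $j$. Because the sub-tuples are disjoint, the overall minimum $\min_i \mathsf{x}_i[x_i]$ breaks into a minimum of per-block minima, and the inner supremum pushes into an independent supremum over each $\vec{x}^{(j)}$ subject to $h_j(\vec{x}^{(j)}) = z_j$, each yielding exactly $\widetilde{h}_j(\vec{\mathsf{x}}^{(j)})[z_j]$. Reassembling reproduces the right-hand side.

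With the lemma in hand, the inductive step is straightforward. At an internal node $v$ with children $v_1, \ldots, v_n$, the tree structure guarantees that the basic events underneath distinct $v_i$ form \emph{disjoint} sets. Hence the unreliability function of the subtree rooted at $v$ factors, according to $t(v)$, as either $\prod_i$ or $1 - \prod_i(1 - \cdot)$ applied to the children's subtree unreliability functions on disjoint argument blocks. Invoking the compositional lemma together with the inductive hypothesis at each $v_i$ then reproduces precisely the recursion \Cref{eq:bu}, and setting $v = R_T$ closes the induction.

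The main obstacle is the compositional lemma itself, specifically the step of exchanging supremum and minimum: this exchange is valid only when the respective constraints involve disjoint variables, which is exactly the structural feature that the tree assumption supplies. The same observation also explains why this bottom-up algorithm does not extend to DAG-structured fault trees, where shared basic events couple the sub-problems across different branches and force the fundamentally different strategy developed in \Cref{sec:DAG_FTs}.
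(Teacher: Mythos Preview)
Your proposal is correct. The paper itself does not give a proof of this theorem: it merely says that it ``is proved analogously to a similar result for attack trees'' in a cited reference. Your structural induction together with the compositional lemma for Zadeh extensions over \emph{disjoint} variable blocks is precisely the natural argument, and the sup--min exchange you identify (valid because the inner objectives depend on disjoint sub-tuples) is exactly the technical point that makes it go through and that fails for DAGs. One small remark: to match \Cref{eq:bu} literally you also need that the Zadeh extension of the $n$-ary AND/OR combination coincides with iterating the Zadeh-extended binary operations $\widetilde{\cdot}$ and $1\,\widetilde{-}\,\cdot$; this is itself an instance of your compositional lemma (each unary $1-\cdot$ sits on its own single-variable block, and an $n$-ary product decomposes into binary products on disjoint blocks), so you may want to mention it explicitly.
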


In the subsequent illustrative example, we employ the following compact notation for fuzzy probabilities that take only finitely many values,
$$
\{x_1\mapsto a_1, \dots, x_n\mapsto a_n\}[x] := 
\begin{cases}
    a_i \quad \text{ if } x = x_i \text{ for some } i\in \{1,\dots, n\}, \\
    0 \quad \text{ otherwise,}
\end{cases}
$$
for any $x_1, \dots, x_n, a_1, \dots, a_n, x \in [0,1]$ with $x_1, \dots, x_n$ distinct.

\begin{example}\label{ex:fuzzy_unreliability_BU_calc}
    We apply the algorithm to Ex.~\ref{ex:fuzzy_unreliability}. Given 
    $$ \mathsf{p}_a :=  \{0.5 \mapsto 0.7, 0.8 \mapsto 1\}, \;
    \mathsf{p}_b :=  \{0.1 \mapsto 1\}, \;
    \mathsf{p}_c := \{0.4 \mapsto 1\}, $$
    we calculate the unreliability as follows:
    \begin{align*}
    \mathsf{p}_{\texttt{OR}(b,c)} &= 1\,\widetilde{-}\,(1\,\widetilde{-}\,\mathsf{p}_b)\:\tilde{\cdot}\:(1\,\widetilde{-}\,\mathsf{p}_c) \\
    &= 1\,\widetilde{-}\, \{0.9 \mapsto 1\} \:\tilde{\cdot}\: \{0.6 \mapsto 1\} \\
    &= \{0.46 \mapsto 1\},
    \end{align*}
    and similarly for $\mathsf{p}_{R_T}=\mathsf{p}_{a}\:\tilde{\cdot}\:\mathsf{p}_{\texttt{OR}(b,c)}$.
    This way, we obtain 
    $$\tilde{U}_T(\vec{\mathsf{p}}) = \mathsf{p}_{R_T}= \{0.23 \mapsto 0.7, 0.368 \mapsto 1\}.$$ 
\end{example}

\section{Computing fuzzy unreliability II: general case}\label{sec:DAG_FTs}

When a fault tree is not tree-structured, but a general DAG-structured FT, then the bottom-up approach no longer computes $\tilde{U}_T(\vec{\mathsf{p}})$ correctly. This is because the probabilities of the children of a node may no longer be independent. In fact, this problem already arises for crisp probability values~\cite{ruijters2015FTA}, so it is no surprise that the same holds in the fuzzy setting.

For general DAG-structured fault trees, we instead leverage the $\alpha$-cut representation of fuzzy numbers.

\subsection{The $\alpha$-cut representation of fuzzy numbers}\label{sec:alpha-cuts}

Using fuzzy membership functions that take only finitely many non-zero values to represent fuzzy probabilities, as in \Cref{ex:fuzzy_unreliability_BU_calc}, is conceptually simple. 
However, this representation is inefficient for computing the Zadeh extension. This is because, in general, the number of non-zero values of the fuzzy unreliability's membership function will grow exponentially with the number of basic events.


If, instead, we choose to work with fuzzy numbers of a particular shape, such as trapezoidal fuzzy numbers, we face the issue that arithmetic operations do not preserve the shape of the fuzzy numbers.

One way to mitigate these problems
is to perform arithmetic operations on the given fuzzy numbers' \emph{$\alpha$-cuts}~\cite{Basiura2015advances}. 
The $\alpha$-cut of a fuzzy element $\mathsf{x}$ is the set of all elements of $X$ whose membership degree is at least $\alpha$.

\begin{figure}[h]
    \begin{center}
    \includegraphics[width=0.5\textwidth]{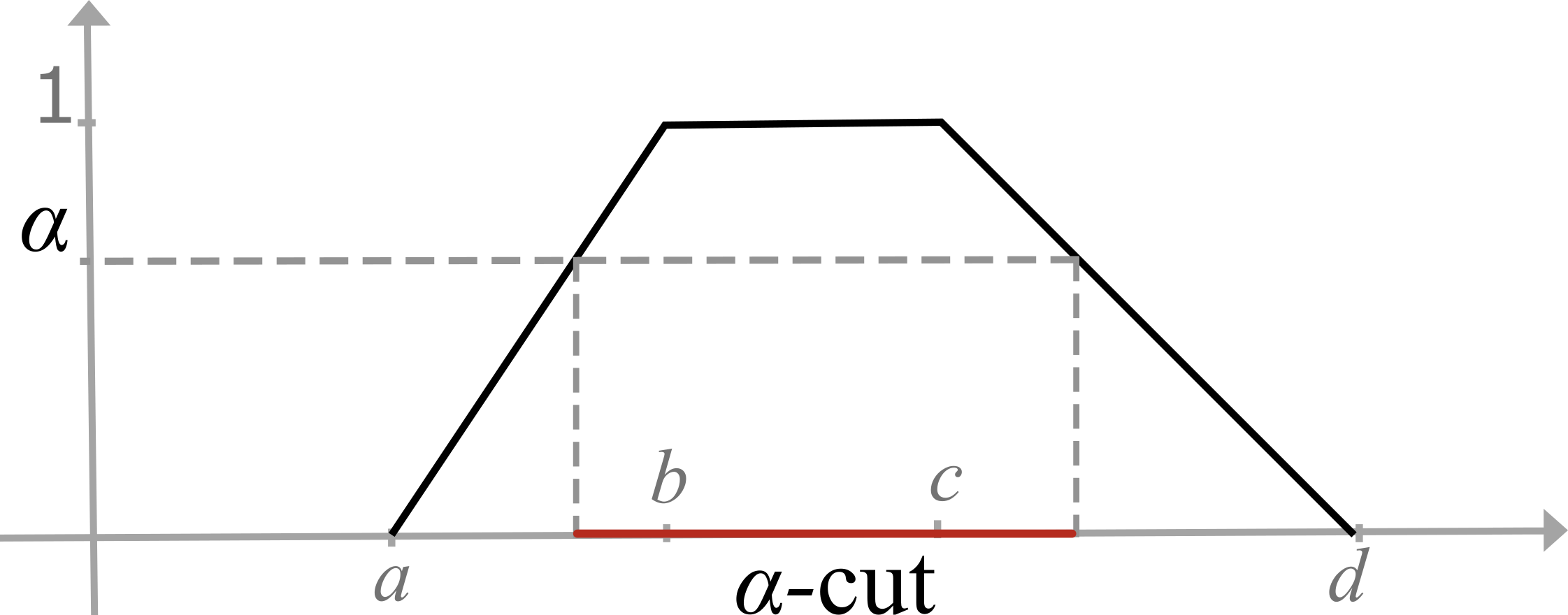}
    \end{center}
    \caption{$\alpha-$cut of trapezoidal fuzzy number $\mathsf{trap}_{a,b,c,d}$.}
    \label{fig:ex:trap_cuts}
\end{figure}

\begin{definition}\label{def:alpha_cut}
Let $\mathsf{x} \in \mathbf{F}(X)$ and $\alpha
\in [0,1]$. The \emph{$\alpha$-cut} $\mathsf{x}^{(\alpha)}$ of $\mathsf{x}$ is
\begin{equation}\label{eq:alpha_cut}
    \mathsf{x}^{(\alpha)} := \{x \in X \ | \ \mathsf{x}[x] \ge \alpha \}.
\end{equation}
\end{definition}

\begin{example}
The $\alpha$-cuts of a trapezoidal fuzzy number are given by

\begin{equation}\label{eq:alpha-cuts-of-triangular}
    \mathsf{trap}^{(\alpha)}_{a,b,c,d} = [(b-a)\cdot \alpha + a, \ d - (d-c)\cdot \alpha], 
\end{equation}

\noindent for all real numbers $a \leq b \leq c \leq d$ and all $\alpha\in (0,1]$ (see Fig.~\ref{fig:ex:trap_cuts}).
\end{example}

\myparagraph{Regular fuzzy numbers.} In the literature, the term ``fuzzy number'' almost universally refers to fuzzy elements of $\mathbb{R}$ that also satisfy certain regularity conditions, where the precise conditions vary across the literature  \cite{dijkman1983fuzzy}. 

We consider \emph{regular} fuzzy numbers, whose $\alpha$-cuts are all intervals. Regularity ensures that Zadeh extensions can be computed at the endpoints of these intervals, under mild assumptions; see \Cref{lemma:zadeh-extension-via-alpha-cuts}.

\begin{definition}\label{def:fuzzy-number}
    A \emph{regular fuzzy number} is a fuzzy element $\mathsf{x}\in\mathbf{F}(\mathbb{R})$ such that: 
    \begin{enumerate}
        \item For all $\alpha\in (0,1]$, the $\alpha$-cut $\mathsf{x}^{(\alpha)}$ is a compact interval, i.e.~there exist $l, r\in \mathbb{R}$ satisfying $\mathsf{x}^{(\alpha)} = [l,r]$. 
        \item The fuzzy membership function $\mathsf{x}: \mathbb{R} \to [0,1]$ is compactly supported, i.e.~the set of all $x\in \mathbb{R}$ with $\mathsf{x}[x]\neq 0$ is bounded. 
    \end{enumerate}
    We write $\fuzznum$ for the set of all regular fuzzy numbers.
    A fuzzy number $\mathsf{x}$ is \emph{nonnegative} if $\mathsf{x}[x]=0$ for all $x<0$. 
\end{definition}

Our definition is more general than e.g.~the one given in \cite{dubois1978operations} and covers most classes of fuzzy numbers used in practice.  
In particular, common parametrized
families of fuzzy numbers such as triangular, trapezoidal, and interval fuzzy numbers are fuzzy numbers in the above sense. 
Note that Gaussian fuzzy numbers are \emph{not} regular, as $\mathsf{gaus}_{m,d}[x] > 0$ for all $x \in \mathbb{R}$. 
Note, however, that since they represent fuzzy \emph{probabilities}, all fuzzy numbers appearing in fuzzy fault trees will have fuzzy membership functions vanishing outside of $[0,1]$, therefore automatically satisfying the second condition from \Cref{def:fuzzy-number}.

The regularity of a fuzzy number allows us to describe it in two alternative ways: either by its fuzzy  membership function $\mathsf{x}\colon \mathbb{R} \rightarrow [0,1]$, or by the two functions $(0,1] \rightarrow \mathbb{R}$ that assign to $\alpha$ the endpoints of the $\alpha$-cut of $\mathsf{x}$. 
The advantage of the second viewpoint is that Zadeh extensions become significantly easier to compute: it turns out that for nonnegative fuzzy numbers, Zadeh extensions of arithmetic operations can simply be computed at the endpoints of the $\alpha$-cuts. 

More generally, we have the following result, which we will later also use to compute the fuzzy unreliability of a fault tree.

\begin{lemma}\label{lemma:zadeh-extension-via-alpha-cuts}
    Let $I\subseteq \mathbb{R}$ be a closed interval, 
    let $\vec{\mathsf{x}}=(\mathsf{x}_1 , \dots, \mathsf{x}_n)\in \fuzznum^n$ be a tuple of regular fuzzy numbers each of whose fuzzy membership function is supported in $I$ (i.e. $\mathsf{x}[x]=0$ for all $x\not\in I$), 
    and let $f: I^n \to \mathbb{R}$ be a continuous function.
    Then:
    \begin{enumerate}
        \item For all $\alpha\in [0,1]$, the $\alpha$-cut of the Zadeh extension of $f$ is, 
        $$ 
        \widetilde{f}(\vec{\mathsf{x}})^{(\alpha)}
        = f\left[\mathsf{x}_1^{(\alpha)} \times \dots \times \mathsf{x}_n^{(\alpha)}\right],
        $$
        i.e.~the image of the product of the $\alpha$-cuts of each $\mathsf{x}_1 , \dots, \mathsf{x}_n$ under $f$.
        \item Assume, moreover, that $f$ is either monotonically non-decreasing or non-increasing, and write each $\alpha$-cut as an interval,
        $$
        \mathsf{x}_{i}^{(\alpha)} = 
        \left[\mathsf{x}_{i}^{(\alpha, l)}, \;\mathsf{x}_{i}^{(\alpha, r)}\right], 
        $$
        for all $i\in\{1,\dots, n\}, \,\alpha\in (0, 1]$. Then, for all $\alpha\in (0,1]$,
        $$
        \widetilde{f}(\vec{\mathsf{x}})^{(\alpha)}
        = \left[f\left(\vec{\mathsf{x}}^{\,(\alpha, l)}\right),
        \;f\left(\vec{\mathsf{x}}^{\,(\alpha, r)}\right)\right],
        $$ 
        where $\vec{\mathsf{x}}^{\,(\alpha, l)} = (\mathsf{x}_1^{(\alpha, l)} , \dots, \mathsf{x}_n^{(\alpha, l)})$
        and, similarly, $\vec{\mathsf{x}}^{\,(\alpha, r)} = (\mathsf{x}_1^{(\alpha, r)} , \dots, \mathsf{x}_n^{(\alpha, r)})$. In other words, the Zadeh extension of $f$ can be computed at the endpoints of each $\alpha$-cut.
    \end{enumerate}
\end{lemma}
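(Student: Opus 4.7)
My plan is to first prove Part 1, which gives an explicit image formula for the $\alpha$-cut of $\widetilde{f}(\vec{\mathsf{x}})$, and then derive Part 2 as a short corollary using monotonicity.

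For Part 1, I would prove the two set inclusions separately. The inclusion $f[\mathsf{x}_1^{(\alpha)} \times \cdots \times \mathsf{x}_n^{(\alpha)}] \subseteq \widetilde{f}(\vec{\mathsf{x}})^{(\alpha)}$ is immediate from the definition of Zadeh's extension: if $\vec{x}$ lies in the product of $\alpha$-cuts and $y = f(\vec{x})$, then $\min_i \mathsf{x}_i[x_i] \geq \alpha$, and the defining supremum yields $\widetilde{f}(\vec{\mathsf{x}})[y] \geq \alpha$. The converse inclusion is the substantive one. For $\alpha \in (0, 1]$, I take a $y$ with $\widetilde{f}(\vec{\mathsf{x}})[y] \geq \alpha$ and, since the supremum may not be attained at a single witness, pick a maximizing sequence $\vec{x}^{(k)} \in I^n$ with $f(\vec{x}^{(k)}) = y$ and $\min_i \mathsf{x}_i[x_i^{(k)}] \geq \alpha - 1/k$. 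Compactness of $I^n$ produces a convergent subsequence $\vec{x}^{(k_j)} \to \vec{x} \in I^n$, and continuity of $f$ gives $f(\vec{x}) = y$. To conclude $\vec{x} \in \prod_i \mathsf{x}_i^{(\alpha)}$, I would use that regularity (each $\alpha$-cut is a compact interval, hence closed) implies every $\mathsf{x}_i$ is upper semi-continuous: for any $\beta < \alpha$, eventually $x_i^{(k_j)} \in \mathsf{x}_i^{(\beta)}$, and closedness of this set forces $x_i \in \mathsf{x}_i^{(\beta)}$; letting $\beta \uparrow \alpha$ yields $\mathsf{x}_i[x_i] \geq \alpha$. (The boundary case $\alpha = 0$ is trivial once one adopts the natural convention identifying $\mathsf{x}^{(0)}$ with the closed support contained in $I$.)

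For Part 2, I would apply Part 1 to write $\widetilde{f}(\vec{\mathsf{x}})^{(\alpha)}$ as the image under $f$ of the compact, connected box $\prod_i [\mathsf{x}_i^{(\alpha, l)}, \mathsf{x}_i^{(\alpha, r)}]$. Continuity of $f$ makes this image a compact interval of $\mathbb{R}$, and monotonicity of $f$ pins down its endpoints: when $f$ is non-decreasing in every coordinate, it attains its minimum at the lower corner $\vec{\mathsf{x}}^{(\alpha, l)}$ and its maximum at the upper corner $\vec{\mathsf{x}}^{(\alpha, r)}$, yielding the claimed formula; the non-increasing case is symmetric, with the roles of the two corners swapped.

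The main obstacle is clearly the converse inclusion in Part 1: one has to turn a supremum bound on the Zadeh extension into an actual witness lying inside the product of closed $\alpha$-cuts. The regularity condition does exactly the right job here, because compactness of $I^n$ supplies a limit point, while closedness of each $\alpha$-cut (equivalently, upper semi-continuity of each $\mathsf{x}_i$) ensures that the fuzzy membership degrees do not drop in the limit. Without this regularity, the Zadeh extension's supremum could be approached only by sequences escaping every fixed $\alpha$-cut, and the clean image characterization would break down.
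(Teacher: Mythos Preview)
Your proposal is correct and follows essentially the same strategy as the paper: both arguments exploit that regularity makes each $\mathsf{x}_i$ upper semi-continuous and compactly supported, so the supremum defining the Zadeh extension is actually attained on the closed fibre $f^{-1}(y)\cap I^n$, after which Part~2 drops out from continuity plus monotonicity exactly as you describe. One small slip worth fixing: you invoke ``compactness of $I^n$'' to extract a convergent subsequence, but the lemma only assumes $I$ is closed (and the paper later applies it with $I=[0,\infty)$); the repair is immediate, since for $k$ large enough each $x_i^{(k)}$ already lies in the compact support of $\mathsf{x}_i$, which is precisely the compactness the paper's own argument leans on.
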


The proof of \Cref{lemma:zadeh-extension-via-alpha-cuts} is given in \Cref{sec:proof-of-lemma-on-zadeh-extensions} of the appendix. Its first part is essentially well known \cite[Proposition 5.1]{nguyen1978note}, while the second part follows easily from the assumption of monotonicity. 

Applying \Cref{lemma:zadeh-extension-via-alpha-cuts} to the addition and multiplication functions $+,\cdot:$ $[0,\infty)^2$ $\to$ $[0,\infty)$ as well as to $x \mapsto 1-x$, we see that the Zadeh extension of all arithmetic operations that appear in elementary probability calculations can be computed on the level of $\alpha$-cuts. 

\begin{example}
    Consider two triangular fuzzy numbers $\triangle_{1,2,3}, \triangle_{3,4,6}\in \fuzznum$. We wish to calculate the (Zadeh-extended) product of these two fuzzy numbers using their $\alpha$-cuts. By \Cref{eq:alpha-cuts-of-triangular}, for any $\alpha \in (0,1]$,
    $$
    \triangle^{(\alpha)}_{1,2,3} = [1+\alpha, 3-\alpha], \quad
    \triangle^{(\alpha)}_{3,4,6} = [3+\alpha, 6-2\alpha].
    $$
    Therefore, by \Cref{lemma:zadeh-extension-via-alpha-cuts},  
    $$
    (\triangle_{1,2,3} \ \widetilde{\cdot} \ \triangle_{3,4,6})^{(\alpha)} = [\alpha^2 +4\alpha+3, 2\alpha^2-12\alpha+18]
    $$
    Clearly, these $\alpha$-cuts are not the ones of a triangular fuzzy number. To determine the fuzzy membership function of the above (Zadeh-extended) product of fuzzy numbers, we instead need to solve two nonlinear equations: $\alpha^2 +4\alpha+3=x$, and $2\alpha^2-12\alpha+18=x$. The solutions  in $[0,1]$ then yield the fuzzy membership function
    \begin{align}
    (\triangle_{1,2,3} \ \widetilde{\cdot} \ \triangle_{3,4,6})[x] &= 
    \begin{cases}
        -2 + \sqrt{1+x}, & \textrm{if } 3 \leq x \leq 8,\\
        \tfrac{6 - \sqrt{2x}}{2}, & \textrm{if } 8 < x \leq 18,\\
        0, & \text{otherwise}.
    \end{cases}
    \end{align}
    This provides an example of how working with $\alpha$-cuts tends to be simpler than working fuzzy membership functions directly.
\end{example}

\subsection{Computations via $\alpha$-cuts}\label{sec:BU_alpha_cuts}

To make our algorithms for computing the fuzzy unreliability applicable in practice, we first choose a finite number of $\alpha$-cuts $n_{\text{cuts}}$. We then represent a (regular) fuzzy number $\mathsf{x}$ by $n_{\text{cuts}}$ pairs $(\mathsf{x}^{(\alpha, l)}, \mathsf{x}^{(\alpha, r)})$ for each $\alpha \in \left\{\tfrac{1}{n_{\text{cuts}}},\tfrac{2}{n_{\text{cuts}}},\cdots,1\right\}$. The interpretation is that $\mathsf{x}^{(\alpha)} = [\mathsf{x}^{(\alpha, l)}, \mathsf{x}^{(\alpha, r)}]$. 
In this way, we can represent common families of fuzzy numbers such as triangular or trapezoidal fuzzy numbers at a good degree of accuracy in a finite array of a fixed size. Moreover, the fuzzy operations appearing in the bottom-up algorithm, see \eqref{eq:bu}, can now be performed level-wise using \Cref{lemma:zadeh-extension-via-alpha-cuts}. Hence, at each node, we need to make $\mathcal{O}(n_{\text{cuts}})$ crisp arithmetic operations, for a total time complexity of $\mathcal{O}(n_{\text{cuts}}\cdot |V|)$, where $|V|$ is the number of nodes of the given fault tree.

We will exploit $\alpha$-cuts in the same way in the general DAG-structured case.

\subsection{Computing fuzzy unreliability  via $\alpha$-cuts}

 In fact, the following result allows us to compute the $\alpha$-cuts of the fuzzy unreliability using \emph{any} algorithm for the unreliability of ordinary DAG-structured fault trees.

\begin{theorem}\label{theo:DAG-algo}
    Let $T$ be a fault tree and let $\vec{\mathsf{p}}=(\mathsf{p}_v) \in \mathbf{F}([0,1])^{\basicevents_T}$
    be a fuzzy probabilistic status vector. Moreover, assume that for all $v\in \basicevents_T$, $\mathsf{p}_v$ is a regular fuzzy number in the sense of \Cref{def:fuzzy-number} and write 
    $$
    \mathsf{p}_v^{(\alpha)} = [\mathsf{p}_v^{(\alpha, l)}, \mathsf{p}_v^{(\alpha, r)}], 
    $$
    as well as, $\vec{\mathsf{p}}^{(\alpha, l)}:= (\mathsf{p}_v^{(\alpha, l)})_{v\in \basicevents_T}$ and $\vec{\mathsf{p}}^{(\alpha, r)}:= (\mathsf{p}_v^{(\alpha, r)})_{v\in \basicevents_T}$, for all $\alpha\in (0,1]$.  
    Then 
    $$
    \widetilde{U}_T(\vec{\mathsf{p}})^{(\alpha)} = \left[U\left(\vec{\mathsf{p}}^{(\alpha, l)}\right),\: U\left(\vec{\mathsf{p}}^{(\alpha, r)}\right)\right],
    $$
    for all $\alpha\in (0,1]$.
\end{theorem}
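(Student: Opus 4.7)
The plan is to reduce the claim directly to part 2 of \Cref{lemma:zadeh-extension-via-alpha-cuts}, applied to $f = U_T$ viewed as a real-valued function on $I^{\basicevents_T}$ with $I = [0,1]$. Indeed, $\widetilde{U}_T(\vec{\mathsf{p}})$ is by \Cref{def:fuzzy_unreliability} the Zadeh extension of $U_T$, and the desired equation is exactly the conclusion of that lemma. So the task reduces to verifying its three hypotheses: support of each $\mathsf{p}_v$ in $I$, continuity of $U_T$, and monotonicity of $U_T$.

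\textbf{Easy hypotheses.} The support condition is built in: since $\vec{\mathsf{p}}\in\mathbf{F}([0,1])^{\basicevents_T}$, each $\mathsf{p}_v$ vanishes outside $[0,1]$, so it is automatically supported in $I=[0,1]$ (and regularity, via part 2 of \Cref{def:fuzzy-number}, makes this support compact). Continuity of $U_T$ is immediate from \Cref{eq:UT}, which exhibits $U_T$ as a multilinear polynomial in the coordinates of $\vec{p}$.

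\textbf{Monotonicity via coherence.} The substantive step is to show that $U_T$ is monotonically non-decreasing in each coordinate $p_v$. I would first prove the structure-function analogue: by an easy induction on the height of a node $v\in V$, using that $\wedge$ and $\vee$ preserve coordinatewise monotonicity, the Boolean function $\vec{b}\mapsto S_T(v,\vec{b})$ is monotonically non-decreasing for every $v$. To lift this to $U_T$, given $\vec{p}\leq \vec{p}\,'$ componentwise, I would couple the two Bernoulli vectors via a common family of i.i.d.\ uniform random variables $(U_v)_{v\in\basicevents_T}$, producing joint realizations $\vec{B},\vec{B}\,'$ with $B_v\leq B_v'$ almost surely for all $v$. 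Coherence of $S_T$ then forces $S_T(\,\vec{B}\,)\leq S_T(\,\vec{B}\,')$ pointwise, and monotonicity of expectation yields $U_T(\vec{p})\leq U_T(\vec{p}\,')$.

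\textbf{Conclusion and main obstacle.} With all three hypotheses in hand, part 2 of \Cref{lemma:zadeh-extension-via-alpha-cuts} applied to $f=U_T$ directly yields the claimed formula for the endpoints of $\widetilde{U}_T(\vec{\mathsf{p}})^{(\alpha)}$. The main obstacle is conceptual rather than technical: one must recognize that the monotonicity hypothesis of the lemma is precisely the \emph{coherence} property of fault trees highlighted in \Cref{sec:intro}, and that the rest of the work --- the delicate $\alpha$-cut manipulation --- has already been offloaded to \Cref{lemma:zadeh-extension-via-alpha-cuts}. No case distinction on the structure of $T$ (tree vs.\ DAG) is needed, which is precisely what makes the statement applicable to \emph{any} algorithm computing the crisp unreliability.
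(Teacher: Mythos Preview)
Your proposal is correct and follows essentially the same route as the paper: reduce to part~2 of \Cref{lemma:zadeh-extension-via-alpha-cuts}, check continuity via the polynomial expression \eqref{eq:UT}, and establish monotonicity of $U_T$ by first noting that $S_T$ is a monotone Boolean function and then lifting this to probabilities via a coupling through i.i.d.\ uniform random variables. The paper packages the last step into a separate lemma (\Cref{lemma:monotonicity}) but the argument is the same as yours.
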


In other words, the $\alpha$-cuts of the fuzzy unreliability can be computed at the endpoints of the intervals that constitute $\alpha$-cuts of the given fuzzy probabilistic status vector. The key reason for this result is the monotonicity of the unreliability function; a full proof of \Cref{theo:DAG-algo} is given in \Cref{sec:proof-DAG-algo} of the appendix. 


\subsubsection{Algorithms for fuzzy unreliability in the general case.}\label{sec:bdd-based-algo} 
As a direct consequence of \Cref{theo:DAG-algo}, 
we can adapt any unreliability algorithm (see, for example, \cite{rauzy1993new}) to also compute the fuzzy unreliability $\widetilde{U}_T(\vec{\mathsf{p}})$ in its $\alpha$-cut representation, by applying it to the endpoints of the $\alpha$-cuts of the given fuzzy probabilities. If we represent fuzzy numbers using $N$ $\alpha$-cuts, we will hence need to run our chosen unreliability algorithm $2N$ times to compute the fuzzy unreliability, introducing a merely constant overhead. Therefore, we may conclude that any unreliability algorithm is efficient for fuzzy fault trees whenever it is efficient for ordinary ones. Using the state-of-the-art (modularised) BDD-based unreliability algorithm \cite{basgoze2022BDDs}, we confirm this conclusion with the experiments presented in the subsequent section.

\section{Experiments}\label{sec:experiment}

\begin{figure}[t]
\begin{floatrow}
\ffigbox{%
\includegraphics[width=\linewidth]{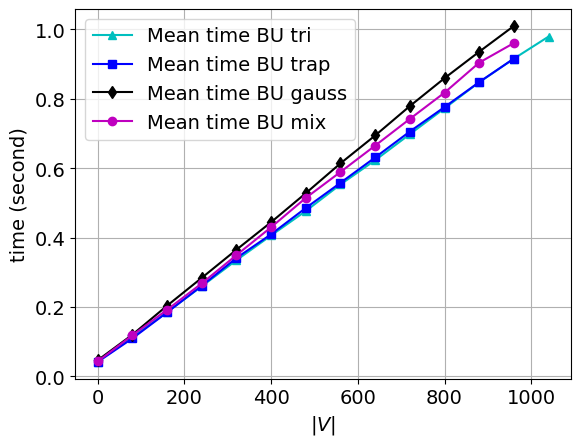}
}{%
 \caption{Mean time (in seconds) of performing bottom-up algorithm. Groups of generated trees are determined by the number of nodes $|V|$.}
    \label{fig:mean_time}
}
\capbtabbox{
  \begin{tabular}{lc} 
  Source   &  $|V|$  \\ \hline
\cite{stamatelatos2002fault} Fig. 12-7     & 10 \\
\cite{stamatelatos2002fault} Fig. 10-14    & 11 \\
\cite{MENTES2011application} Alt A1     & 17 \\
\cite{MENTES2011application} Alt A2  & 22  \\
\cite{MENTES2011application} Alt A6      & 31  \\
\cite{yazdi2017failure} Fig. 3 & 35 \\
\cite{MENTES2011application} Alt A12    & 39 \\
\cite{Sadiq2008predicting} Fig. 2  & 42 \\
\cite{MENTES2011application} Alt A11  & 45  \\
\cite{SHU2006using} Fig. 5    & 50  \\
  \end{tabular}
}{%
  \caption{FTs from the benchmarks used as subtrees for generation.}%
  \label{table:FTs}
}
\end{floatrow}
\end{figure}

We evaluate the performance of our two methods for fuzzy unreliability: 
the linear-time bottom-up algorithm for the special case of tree-structured FTs, and the general approach for DAG-structured FTs based on \Cref{theo:DAG-algo}. 

\subsection{Performance evaluation for tree-shaped FTs}

By the linear-time complexity of the bottom-up method,
computing the unreliability for tree-structured FTs should be feasible even for very large sizes.
To confirm this hypothesis, we therefore need to consider particularly large fault trees.
However, obtaining large FTs for real-world systems is infeasible due to confidentiality reasons, and the fact that we require tree-structured and non-dynamic FTs for this experiment. 
For this reason, we generate a custom benchmark of large tree-structured FTs, using the set of ten FTs from the literature shown in Table~\ref{table:FTs}. 
This method is inspired by the one used in \cite{lopuhaa2023attack}.

\myparagraph{Generation of large tree-structured FTs.} 
We generate large FTs using two ways of combining two FTs $T_1, T_2$ as shown in Fig.~\ref{fig:combineFTs}, starting from the ten FTs shown in \Cref{table:FTs}: 
\begin{enumerate}
    \item \emph{Horizontal combination}. We introduce a new root node with a random gate and add two edges: one edge from the new root to $R_{T_1}$ and another one from  the new root to $R_{T_2}$. The resulting system is a larger system consisting of two subsystems side-by-side.
    \item \emph{Vertical combination}. We randomly pick a basic event $v$ from $T_1$ and replace $v$ with $T_2$. The resulting system is a larger system in which the subsystem $T_2$ becomes part of $T_1$. 
\end{enumerate}

\begin{wrapfigure}[13]{r}{0.35\textwidth}
\centering
\vspace{-5pt}
\includegraphics[width=3cm]{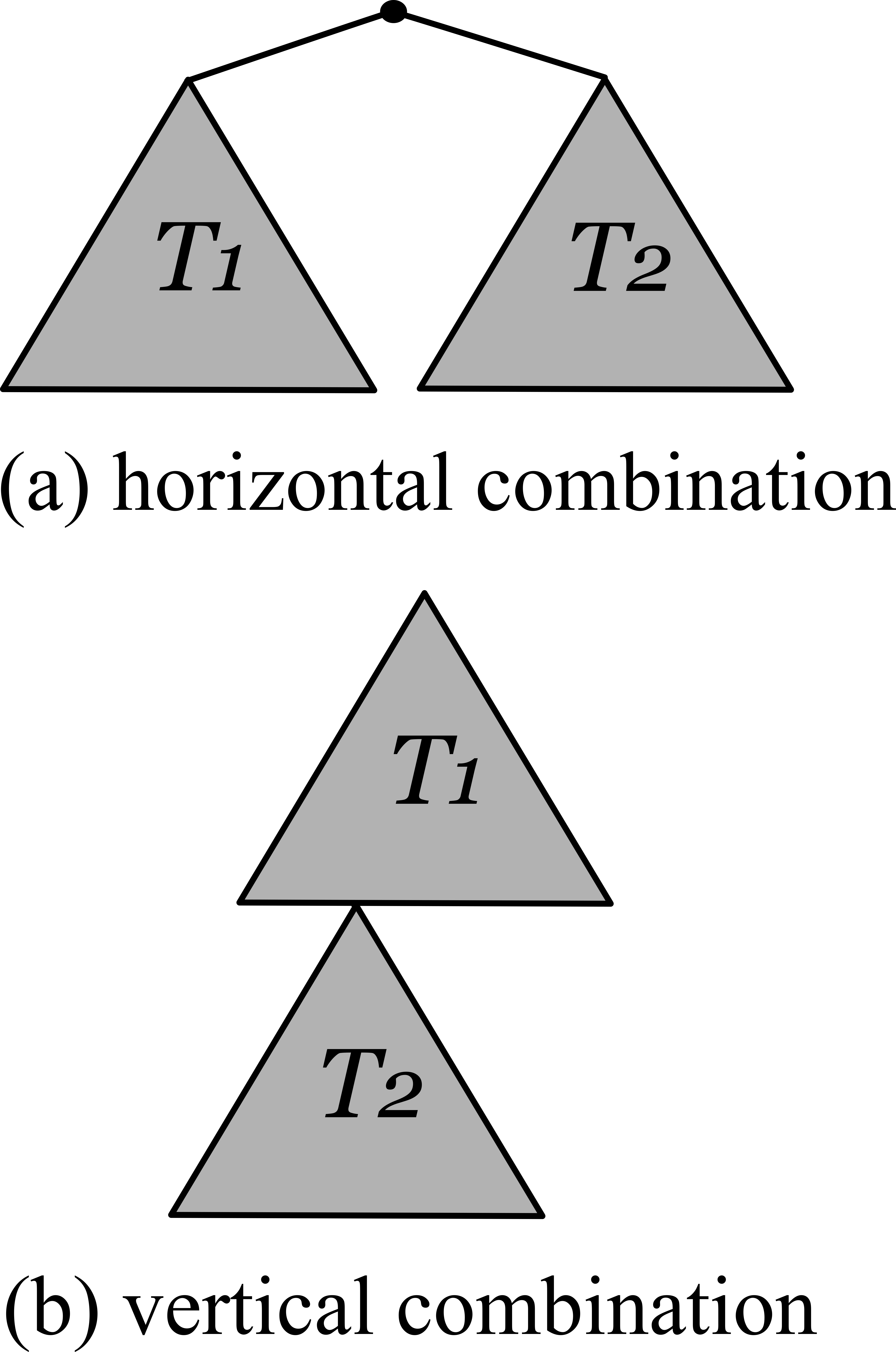}
  \caption{Combining FTs.}%
  \label{fig:combineFTs}
\end{wrapfigure}

To obtain a benchmark of \emph{fuzzy} FTs, we fuzzify the probabilities of all basic events using triangular, trapezoidal, truncated Gaussian fuzzy numbers, as well as a mixture of these, all centred at the original probabilities. 

The experiment was then conducted as follows. For each number $k \in$ $\{1,$ $\dots, N\}$, we randomly create large FTs using the two aforementioned combinations such that their number of nodes $|V|$ is at least $k$. These large FTs are then divided into groups by their size. Herein, we take $N=1000$, $P=80$ so the trees belong to group $\lceil|V_k|/P\rceil$. We run the bottom-up algorithm for every FT in each group and then derive the mean time for every group of large FTs. The mean time for running the bottom-up method for different BE fuzzy types is displayed in Fig.~\ref{fig:mean_time}. As expected from Section~\ref{sec:BU_alpha_cuts}, computation time is linear in FT size. Furthermore, computation is very fast, with even very large FTs taking only approximately $1s$ to calculate. Therefore, the proposed bottom-up algorithm is not only accurate, but also very efficient.


\subsection{Performance evaluation for general DAG-structured FTs}\label{sec:dag-structured-experiments}

\begin{wrapfigure}[14]{l}{0.45\textwidth} 
\vspace{-15pt} 
\centering
\includegraphics[width=1.0\linewidth]{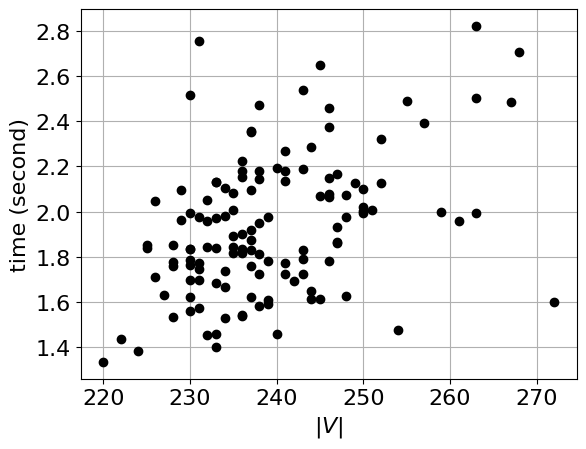} 
\caption{Performance of our method for general DAG-structured FTs.}
\label{fig:experiment_BDD}
\vspace{-5pt} 
\end{wrapfigure}

In this experiment, we employ Storm \cite{Hensel2022storm}, a state-of-the-art model checker that calculates FT unreliability via a BDD-based approach with modularization~\cite{basgoze2022BDDs}. 
\Cref{theo:DAG-algo} allows us to conveniently make use of this off-the-shelf tool to also compute the \emph{fuzzy} unreliability; see \Cref{sec:bdd-based-algo}.
Since we do not need particularly large FTs in this case, we evaluate our method on an established benchmark of 125 randomly generated fault trees from \cite{basgoze2022artifact}. The FTs in the benchmark have 239 nodes on average.
The DAG-structured FTs in this set have \emph{crisp} probabilistic status vectors. 
In order to obtain a benchmark of \emph{fuzzy} fault trees, we equip each basic event $b$ with the triangular fuzzy number $\triangle_{0.8p_b,p_b,1.2p_b}$, given the crisp  probability $p_b$. 
The number of $\alpha$-cuts is set to $10$. 
Figure~\ref{fig:experiment_BDD} shows the performance of our approach, as measured by the runtime of each instance. We observe that the elapsed time of computation for each tree is generally fast (less than 3 seconds). 

\newpage
\section{Related work}
\label{sec:related_works}

In the 1980s, fuzzy fault tree analysis was pioneered by Tanaka et al.~\cite{tanaka1983fault} and has since been extensively studied. Reviews are reported in~\cite{mahmood2013FFTA,ruijters2015FTA,kabir2017anoverview}. 

\subsubsection{Efficient fuzzy arithmetic.} Various studies 
offer solutions to the problem of efficiently computing the fuzzy arithmetic operations. 
When dealing with the multiplication of triangular or trapezoidal fuzzy numbers, it is common to use approximations that yield a fuzzy number of the same type as the original operands \cite{tanaka1983fault,peng2008approach,Liang1993FFTA}. 
However, these approaches yield approximate solutions that  generally overestimate the fuzzy probability of the top event. 
While over-approximations are conservative, 
there are no formal guarantees on their accuracy, rendering this method less informative in general. 
Alternatively, \cite{DSW1985fuzzy} provides an efficient way to calculate fuzzy arithmetic operations using the $\alpha$-cut representation of fuzzy numbers.

\subsubsection{Uncertainty quantification in FTs.}
Alternative frameworks for uncertainty quantification in fault trees include \emph{sensitivity analysis} \cite{ruijters2015FTA,rushdi1985uncertainty}, 
\emph{imprecise probability} (or ``probability intervals'') \cite{jacob2011uncertainty,jacob2012imprecise}, 
as well as Bayesian approaches \cite{prabhu2020uncertainty}, in which uncertainty in parameters is treated by viewing these parameters as random variables themselves. 
The approach using imprecise probability is subsumed by fuzzy fault tree analysis using \emph{interval fuzzy numbers}; see \Cref{sec:classes-of-fuzzy-numbers}. Sensitivity analysis considers how sensitively a given risk metric responds to varying model parameters. In contrast to fuzzy fault tree analysis, it is not concerned with a precise quantification of uncertainty in these parameters themselves. Finally, the Bayesian method of \cite{prabhu2020uncertainty} relies on Monte-Carlo simulation, which is an important further difference to fuzzy fault tree analysis, in addition to its differing conceptual foundation.

\subsubsection{Defuzzification approaches.}
A widely studied approach \cite{lin1997hybridFTA,KUMAR2022ffta} to fuzzy fault tree analysis
is to first obtain fuzzy probabilities for all basic events from expert opinions, then ``defuzzify'' these to a crisp values, and finally compute the probability of the top-level event as usual.  
Here, fuzzy theory is only used as an intermediate step for converting and aggregating expert knowledge to precise numerical probability values, and does not provide any uncertainty quantification at the system level.

\subsubsection{Linguistic variables.} 
The concept of \emph{linguistic variables} is used in fuzzy fault tree analysis to obtain fuzzy probabilities of basic events from expert opinions \cite{bowles1995application,lin1997hybridFTA,pan2007assessing,yuhua2005estimation}. A linguistic variable is a variable whose values range over a finite set of natural language descriptions. Each such description is then assigned a fuzzy number, modelling its inherent ambiguities. In the case of fuzzy fault trees, the linguistic variables of interest concern the probabilities of basic events, ranging from ``very low'' over ``medium'' to ``very high''.

\subsubsection{Case studies.}
An overview of case studies using uncertainty quantification in fault trees, and fuzzy fault trees in particular, is given in \cite{yazdi2019uncertainty}.



\section{Conclusion and future work}

Based on our rigorous definition of fuzzy fault trees (\Cref{def:fuzzy_unreliability}), \Cref{theo:DAG-algo} enables extending any fault tree unreliability algorithm to this setting---in a simple, correct and efficient manner. 
In addition, the efficacy of this method is confirmed by the experiments presented in \Cref{sec:experiment}.

There remain several open problems for future work. 
Most importantly, our rigorous formulation of fuzzy fault tree analysis may serve as a basis for a thorough and critical comparison to other frameworks for uncertainty quantification---which framework is most appropriate in which context?
Moreover, finding efficient algorithms for purely probabilistic (``Bayesian'') uncertainty quantification in fault trees remains an open direction. 
Finally, an interesting avenue for future research is to extend our results for fuzzy fault tree analysis to related risk models, such as dynamic fault trees (DFTs), as well as general DAG-structured attack trees. 




\myparagraph{Acknowledgements.}
This work was partially funded by the NWO grants NWA.1160.18.238 (PrimaVera), and  KICH1.ST02.21.003 
  (ZORRO), 
the European Union's Horizon 2020 research and innovation programme under the Marie Sk\l{}odowska-Curie grant agreement No 101008233, the ERC Proof-of-Concept grant 101187945 (\emph{RUBICON}), and the ERC Consolidator Grant 864075 (\emph{CAESAR}).%

\myparagraph{Data Availability.} Scripts to reproduce our experimental evaluation are archived and publicly available at \cite{dang_2025_15337097}.

%
%
%
\bibliographystyle{splncs04}
\bibliography{references}

\appendix

\section{Appendix}\label{sec:appendix}

\subsection{Proof of \Cref{lemma:zadeh-extension-via-alpha-cuts}}\label{sec:proof-of-lemma-on-zadeh-extensions}
To simplify notation, let 
$$ g: \mathbb{R} \to [0,1], \quad g(x) := \min\limits_{i=1,\dots,n}\ \mathsf{x}_i[x_i].  $$
By \Cref{def:extension_principle}, 
\begin{align*}
    \widetilde{f}(\mathsf{x}_1, \dots, \mathsf{x}_1)^{(\alpha)}
    &= \{y\in \mathbb{R} \mid \widetilde{f}(\mathsf{x}_1, \dots, \mathsf{x}_1)[y] \geq \alpha \} \\
    &= \left\{y\in \mathbb{R} \left\vert  \underset{\substack{x \in I^n,\;
    f(x)=y}}{\sup}g(x)  \geq \alpha \right. \right\}
\end{align*}

Since by \Cref{def:fuzzy-number}, each $\mathsf{x}_i$ is upper-semicontinuous and compactly supported as a function $\mathbb{R} \to [0,1]$, the same holds for $g$. 
Moreover, the set over which the supremum is taken is closed, as $f$ is assumed to be continuous and $I$ is closed.
Now, any compactly supported upper-semicontinuous function attains its maximum on a closed set, and therefore,
\begin{align*}
    \widetilde{f}(\mathsf{x}_1, \dots, \mathsf{x}_1)^{(\alpha)}
    &= \left\{y\in \mathbb{R} \:\left\vert\:  \underset{\substack{x \in I^n,\;
    f(x)=y}}{\max}g(x)  \geq \alpha \right. \right\} \\
    &= \left\{y\in \mathbb{R} \:\left\vert\:  \exists x \in I^n: \,f(x)=y, g(x)\geq \alpha \right. \right\} \\
    &= \left\{y\in \mathbb{R} \:\left\vert\:  \exists x \in I^n\, \forall i\in\{1,..., n\}:\, \mathsf{x}_i[x_i]\geq \alpha, f(x)=y \right. \right\} \\
    &= \left\{y\in \mathbb{R} \:\left\vert\:  \exists x \in \prod_{i=1}^n \mathsf{x}_i^{(\alpha)},\, f(x)=y \right. \right\} \\
    &= \left\{f(x) \:\left\vert\: x \in \prod_{i=1}^n \mathsf{x}_i^{(\alpha)} \right. \right\},
\end{align*}
for all $\alpha\in [0,1]$
This shows the first part of the claim. For the second part, note that since $f$ is continuous, the above image of the product of the $\alpha$-cuts under $f$ must be connected and hence be an interval. Finally, the endpoints of these intervals are given by the value of $f$ at the endpoints of the $\alpha$-cuts, by the monotonicity of $f$ in each argument.

\subsection{Proof of \Cref{theo:DAG-algo}}\label{sec:proof-DAG-algo}

Once we know that the unreliability function $U_T$ is continuous and monotonically increasing, the claim follows directly from \Cref{lemma:zadeh-extension-via-alpha-cuts}. The continuity of $U_T$ is clear, since by \Cref{eq:UT}, $U_T$ is a polynomial function.

The monotonicity of $U_T$, on the other hand, follows from \Cref{lemma:monotonicity} below, since, as a composite of AND and OR gates, the structure function $S_T(R_T, -)$ of $T$ is a \emph{monotone} Boolean function. (Recall that a map $f:X\to Y$ between partially ordered sets $X,Y$ is called \emph{monotone} 
if for all $x_1, x_2\in X$, $x_1\leq x_2$ implies $f(x_1)\leq f(x_2)$.)

In the lemma below, $\mathrm{Bern}(p)$ denotes the Bernoulli distribution on the set $\mathbb{B}:=\{0, 1\}$, 
which assigns probability $p\in [0,1]$ to $1$ and $1-p$ to $0$. 
Moreover, if $(X, \leq)$ is a partially ordered set (such as $\mathbb{B}=\{0, 1\}$ or $[0,1]$) 
and $n\in\mathbb{N}$, 
we view $X^n$ as a partially ordered set in which $(x_1, \dots, x_n) \leq (y_1, \dots, y_n)$ 
if and only if $x_i\leq y_i$ for all $i\in \{1, \dots n\}$. (In particular, $U_T: [0,1]^{BE_T}\to [0,1]$ is monotone in this partial order if and only if it is monotonically increasing in each variable.)
We may now state, and prove:

\begin{lemma}\label{lemma:monotonicity}
    Let $n\in \mathbb{N}$ and let $f: \mathbb{B}^n \to \mathbb{B}$ be monotone Boolean function.
    Then 
        $$ \rho : [0,1]^n \to [0,1], \;\; p \mapsto \mathbb{P}_{A_1,\dots, A_n \,\sim\, \bigotimes_{i=1}^n \mathrm{Bern}(p_i)}[f(A_1, \dots, A_2)] $$
    is monotone, too.
\end{lemma}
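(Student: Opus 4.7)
The plan is to prove monotonicity of $\rho$ by a standard coupling argument. Fix $p, q \in [0,1]^n$ with $p\leq q$ componentwise; the goal is to show $\rho(p)\leq \rho(q)$. The key idea is to realize the two product-Bernoulli distributions on a common probability space in such a way that the samples are ordered pointwise.

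Concretely, I would let $U_1, \dots, U_n$ be independent uniform random variables on $[0,1]$ and define, for each $i\in\{1,\dots,n\}$,
\begin{equation*}
    A_i := \mathbb{1}[U_i \leq p_i], \qquad B_i := \mathbb{1}[U_i \leq q_i].
\end{equation*}
A quick check confirms that the marginal laws are $\mathrm{Bern}(p_i)$ and $\mathrm{Bern}(q_i)$ respectively, and since the $U_i$ are independent, $(A_1,\dots,A_n)$ and $(B_1,\dots,B_n)$ are each distributed as $\bigotimes_i \mathrm{Bern}(p_i)$ and $\bigotimes_i \mathrm{Bern}(q_i)$. Moreover, since $p_i\leq q_i$, the indicator inequality $A_i\leq B_i$ holds pointwise on the sample space.

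By the assumed monotonicity of $f$, this yields $f(A_1, \dots, A_n)\leq f(B_1, \dots, B_n)$ pointwise, and so, taking expectations on both sides,
\begin{equation*}
    \rho(p) = \mathbb{E}[f(A_1, \dots, A_n)] \leq \mathbb{E}[f(B_1, \dots, B_n)] = \rho(q),
\end{equation*}
as desired. I do not anticipate any real obstacle: the coupling is textbook, and the only subtle point is to check carefully that the joint laws produced by the coupling are genuinely the intended product measures, which reduces to the independence of the $U_i$. An alternative would be to expand $\rho$ as a multilinear polynomial in $p_1, \dots, p_n$ and show $\partial \rho / \partial p_i \geq 0$ by writing this partial derivative as $\rho(p|_{p_i=1}) - \rho(p|_{p_i=0})$ and invoking monotonicity of $f$ in coordinate $i$; however, the coupling proof is shorter and more transparent.
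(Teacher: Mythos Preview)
Your proof is correct and is essentially the same argument as the paper's: both realize the Bernoulli variables as thresholds $\mathbb{1}[U_i \leq p_i]$ of independent uniforms and then invoke the monotonicity of $f$. The only cosmetic difference is that the paper packages the conclusion as ``$\rho$ is a composite of monotone maps'' (through events in the Borel $\sigma$-algebra and then the measure $Q$), whereas you compare two fixed points $p\leq q$ via the explicit coupling; the underlying idea is identical.
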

\begin{proof}
    Let $Q:= \bigotimes_{i=1}^{n} \mathrm{Unif}[0,1]$ 
    be the joint distribution of $n$ independent uniformly distributed
    random variables on the unit interval. 
    Let $x_i: [0,1]^n \to [0,1]$ be the $i$-th projection.
    Then for all $p\in [0,1]^n$,
    \begin{align*}
        \rho(p) &= \mathbb{P}_{A_1,\dots, A_n \,\sim\, \bigotimes_{i=1}^n \mathrm{Bern}(p_i)}[f(A_1, \dots, A_2)]\\
                &= \mathbb{P}_{x_1,\dots, x_n \,\sim\, Q}[f(\{x_1 \leq p_1\}, \dots, \{x_n \leq p_n\})].
    \end{align*}
    Hence, $\rho$ is the composite, 
        $$ [0,1]^n \xrightarrow{(\{x_i \leq \cdot\})_{i=1}^n} \mathcal{B}([0,1]^n)^n \xrightarrow{f[\,\cdot\,]} \mathcal{B}([0,1]^n) \xrightarrow{Q} [0,1],$$
    of monotone maps. 
    (Here, $\mathcal{B}([0,1]^n)$ is the Borel $\sigma$-algebra on $[0,1]^n$.) 
    Therefore, $\rho$ is itself monotone. 
\end{proof}

\end{document}